 \newtheoremstyle{mytheorem}
 {3pt}
 {3pt}
 {\slshape}
 {}
 {\bfseries}
 {.}
 { }
 {}
\numberwithin{equation}{section}
\theoremstyle{theorem}
\newtheorem{theorem}{Theorem}[section]
\newtheorem{lemma}[theorem]{Lemma}
\theoremstyle{definition}
\newtheorem{remark}{Remark}[section]
\newcommand{\Keywords}[1]{\ifthenelse{\isempty{#1}}{}{\smallskip \smallskip \noindent \textbf{Keywords}. #1}}
\newcommand{\MSC}[2][2010]{\ifthenelse{\isempty{#2}}{}{\smallskip \smallskip \noindent \textbf{#1MSC}. #2}}
\newcommand{\abstractnote}[1]{\ifthenelse{\isempty{#1}}{}{\smallskip \smallskip \noindent \textsuperscript{\dag}#1}}
\def\specialsection{\@startsection{section}{1}%
  \z@{\linespacing\@plus\linespacing}{.5\linespacing}%
  {\normalfont}}
\def\section{\@startsection{section}{1}%
  \z@{.7\linespacing\@plus\linespacing}{.5\linespacing}%
  {\normalfont\scshape}}
\patchcmd{\@settitle}{\uppercasenonmath\@title}{\Large\boldmath}{}{}
\patchcmd{\@settitle}{\begin{center}}{\begin{flushleft}}{}{}
\patchcmd{\@settitle}{\end{center}}{\end{flushleft}}{}{}
\patchcmd{\@setauthors}{\MakeUppercase}{\normalsize}{}{}
\patchcmd{\@setauthors}{\centering}{\raggedright}{}{}
\patchcmd{\section}{\scshape}{\large\bfseries\boldmath}{}{}
\patchcmd{\subsection}{\bfseries}{\bfseries\boldmath}{}{}
\renewcommand{\@secnumfont}{\bfseries}
\patchcmd{\@startsection}{\@afterindenttrue}{\@afterindentfalse}{}{}
\patchcmd{\abstract}{\leftmargin3pc}{\leftmargin1pc}{}{}
\def\maketitle{\par
  \@topnum\z@ 
  \@setcopyright
  \thispagestyle{empty}
  \ifx\@empty\shortauthors \let\shortauthors\shorttitle
  \else \andify\shortauthors
  \fi
  \@maketitle@hook
  \begingroup
  \@maketitle
  \toks@\@xp{\shortauthors}\@temptokena\@xp{\shorttitle}%
  \toks4{\def\\{ \ignorespaces}}
  \edef\@tempa{%
    \@nx\markboth{\the\toks4
      \@nx\MakeUppercase{\the\toks@}}{\the\@temptokena}}%
  \@tempa
  \endgroup
  \c@footnote\z@
  \@cleartopmattertags
}
\newcommand{\cm}{\mathscr{M}}
\newcommand{\op}{\overline{p}}
\newcommand{\pd}{\mathrm{pod}}
\newcommand{\oM}{\overline{M}}
\newcommand{\qbinom}[2]{\begin{bmatrix}#1\\#2\end{bmatrix}}
\newcommand{\Log}{\mathrm{Log}}
\title[Truncated pentagonal number theorem]{A further look at the truncated pentagonal number theorem}
\author[S. Chern]{Shane Chern}
\address{Department of Mathematics, The Pennsylvania State University, University Park, PA 16802, USA}
\email{shanechern@psu.edu}
\date{}
\begin{document}

%

\maketitle

\begin{abstract}

In this paper, we study the asymptotic behavior of the following function
$$M_k(n):=(-1)^{k-1} \sum_{j=0}^{k-1}\big(p(n-j(3j+1)/2)-p(n-j(3j+5)/2-1)\big),$$
which arises from Andrews and Merca's truncated pentagonal number theorem.

\Keywords{Partitions, Euler's pentagonal number theorem, asymptotics.}

\MSC{11P82, 05A17.}
\end{abstract}

\section{Introduction}

In \cite{AM2012}, Andrews and Merca studied a truncated version of Euler's pentagonal number theorem. The motivation of their work arises from the non-negativity of the following function:
\begin{equation}
M_k(n):=(-1)^{k-1} \sum_{j=0}^{k-1}(-1)^j\big(p(n-j(3j+1)/2)-p(n-j(3j+5)/2-1)\big),
\end{equation}
where $n$ and $k$ are positive integers, and $p(n)$ denotes the number of partitions of $n$ \cite{And1976}. Andrews and Merca also gave a pratition-theoretic interpretation of $M_k(n)$. Namely, it denotes the number of partitions of $n$ in which $k$ is the least integer that is not a part and there are more parts $>k$ than there are $<k$.

Their proof of the non-negativity of $M_k(n)$ relies on a clever reformulation of the generating function of $M_k(n)$. Namely, if we put $M_k(0)=(-1)^{k-1}$, then
\begin{align}
\cm_k(q):=\sum_{n\ge 0}M_k(n)q^n=&\frac{(-1)^{k-1}}{(q;q)_\infty} \sum_{j=0}^{k-1} (-1)^j q^{j(3 j+1)/2} (1-q^{2j+1})\label{eq:gf1}\\
=&(-1)^{k-1}+\sum_{n\ge 1} \frac{q^{\binom{n}{2}+(k+1)n}}{(q;q)_n} \qbinom{n-1}{k-1}_q,\label{eq:gf2}
\end{align}
where
$$(A;q)_n = \prod_{j=0}^{n-1} (1-Aq^j),$$
and
$$\qbinom{A}{B}_q=\begin{cases}
0, & \text{if $B<0$ or $B>A$},\\
\frac{(q;q)_A}{(q;q)_B (q;q)_{A-B}}, & \text{otherwise}.
\end{cases}$$
One immediately sees that the non-negativity of $M_k(n)$ for $n\ge 1$ follows from \eqref{eq:gf2}.

Now, if one fixes $k$ and computes some values of $M_k(n)$, one may notice that $M_k(n)$ grows rapidly as $n$ becomes large. This stimulates us to study the asymptotic behavior of $M_k(n)$. In this paper, we shall show

\begin{theorem}
Let $\epsilon>0$ be arbitrary small. Then as $n\to\infty$, we have, for $k\ll n^{\frac{1}{8}-\epsilon}$,
\begin{equation}\label{eq:main}
M_k(n)=\frac{\pi}{12\sqrt{2}}kn^{-\frac{3}{2}}e^{\frac{2\pi\sqrt{n}}{\sqrt{6}}}+O\left(k^3n^{-\frac{7}{4}}e^{\frac{2\pi\sqrt{n}}{\sqrt{6}}}\right).
\end{equation}
\end{theorem}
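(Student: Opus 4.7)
The plan is to reduce $M_k(n)$ to a short alternating sum of ordinary partition numbers and then apply the Hardy--Ramanujan--Rademacher asymptotic for $p(n)$ via a single Taylor expansion; no circle method beyond the one-term Rademacher estimate is needed.

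I would first observe that the inner sum in \eqref{eq:gf1} is exactly a symmetric partial sum of Euler's pentagonal series,
$$
\sum_{j=0}^{k-1}(-1)^{j}q^{j(3j+1)/2}(1-q^{2j+1})=\sum_{i=-(k-1)}^{k}(-1)^{i}q^{G_{i}},\qquad G_{i}:=\tfrac{i(3i-1)}{2},
$$
and consequently, after multiplying by $1/(q;q)_\infty=\sum_{n\ge 0}p(n)q^n$ and extracting $[q^n]$,
$$
M_{k}(n)=(-1)^{k-1}\sum_{i=-(k-1)}^{k}(-1)^{i}p(n-G_{i}),
$$
with the usual convention $p(m)=0$ for $m<0$. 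Everything that follows is a direct asymptotic analysis of this finite combination.

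Next, I would replace each $p(n-G_{i})$ by its leading Rademacher term
$$
\phi(m):=\frac{1}{2\pi\sqrt{2}}\frac{d}{dm}\!\left(\frac{e^{\mu(m)}}{\sqrt{m-1/24}}\right),\qquad \mu(m):=\pi\sqrt{(2m-1/12)/3},
$$
with remainder $p-\phi=O(e^{\mu/2})$, which is exponentially smaller than anything in \eqref{eq:main} once summed over the $2k$ relevant indices. In the regime $k\ll n^{1/8-\epsilon}$ we have $G_{i}\le G_{k}\sim 3k^{2}/2=o(n)$, so Taylor's theorem gives
$$
\phi(n-G_{i})=\phi(n)-G_{i}\phi'(n)+R_{i},\qquad |R_{i}|\le \tfrac{1}{2}G_{i}^{2}\!\!\sup_{\xi\in[n-G_{i},n]}\!\!|\phi''(\xi)|,
$$
and direct differentiation (using $\mu'(n)=\pi/\sqrt{6n}+O(n^{-3/2})$) yields
$$
\phi'(n)=\frac{\pi}{12\sqrt{2}}\frac{e^{\mu(n)}}{n^{3/2}}+O\!\Bigl(\frac{e^{\mu(n)}}{n^{2}}\Bigr),\qquad \sup_{[n-G_{i},n]}|\phi''|=O\!\Bigl(\frac{e^{\mu(n)}}{n^{2}}\Bigr).
$$

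Finally I would substitute the Taylor expansion back into the finite sum. The zeroth moment $\sum_{i=-(k-1)}^{k}(-1)^{i}$ vanishes by symmetry, and pairing $i$ with $-i$ in the first moment (using $G_{i}+G_{-i}=3i^{2}$ and $G_{i}-G_{-i}=-i$) reduces it to elementary alternating sums of $i$ and $i^{2}$, producing
$$
T_{1}(k):=\sum_{i=-(k-1)}^{k}(-1)^{i}G_{i}=(-1)^{k}k.
$$
Thus $-T_{1}(k)\phi'(n)=(-1)^{k+1}\tfrac{\pi k}{12\sqrt{2}}e^{\mu(n)}/n^{3/2}$, and multiplying by $(-1)^{k-1}$ reproduces the main term of \eqref{eq:main}. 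The trivial bound $\sum_{|i|\le k}G_{i}^{2}=O(k^{5})$ controls $\sum_{i}(-1)^{i}R_{i}=O(k^{5}e^{\mu(n)}/n^{2})$; together with the $O(ke^{\mu(n)}/n^{2})$ coming from the subleading part of $\phi'(n)$, this sits inside the advertised $O(k^{3}e^{\mu(n)}/n^{7/4})$ precisely because $k\ll n^{1/8-\epsilon}$ forces $k^{2}\le n^{1/4}$ and hence $k^{5}/n^{2}\le k^{3}/n^{7/4}$. The main obstacle, modest as it is, is exactly this bookkeeping: verifying that the exponent $\tfrac{1}{8}-\epsilon$ is strong enough to absorb the crude remainder into the stated error, and computing $T_{1}(k)$ with the correct sign; everything else is a routine application of Rademacher's formula.
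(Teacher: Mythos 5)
Your proposal is correct, and it reaches \eqref{eq:main} by a genuinely different route than the paper. The paper never decomposes $M_k(n)$ into partition numbers: it works with the generating function \eqref{eq:gf1} and runs Wright's circle method, linearizing $1-q^{2j+1}=-2(2j+1)\pi i\tau+O(j^2|\tau|^2)$ near $q=1$ so that the truncated theta factor becomes $(-1)^k2\pi ik\tau+O(k^3y^2)$, bounding $1/(q;q)_\infty$ on the minor arc $My<|x|\le\frac12$, and evaluating the major arc through Wright's integral $P_{3/2}$ and the Bessel asymptotic for $I_{-5/2}$. You instead collapse $M_k(n)$ into the finite sum $(-1)^{k-1}\sum_{i=-(k-1)}^{k}(-1)^ip(n-G_i)$ with $G_i=i(3i-1)/2$ (the reindexing is right: $j(3j+1)/2=G_{-j}$ and $j(3j+5)/2+1=G_{j+1}$, with matching signs) and apply the one-term Hardy--Ramanujan--Rademacher formula termwise. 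The two computations that carry the content both check out: $\sum_{i=-(k-1)}^{k}(-1)^i=0$ over $2k$ consecutive integers, and $\sum_{i=-(k-1)}^{k}(-1)^iG_i=(-1)^kk$, so the surviving contribution is $k\,\phi'(n)=\frac{\pi}{12\sqrt{2}}kn^{-3/2}e^{\mu(n)}\bigl(1+O(n^{-1/2})\bigr)$, and replacing $e^{\mu(n)}$ by $e^{2\pi\sqrt{n}/\sqrt{6}}$ costs only a further relative $O(n^{-1/2})$, absorbed by the error term; your Taylor remainder $O(k^5n^{-2}e^{\mu(n)})$ sits inside $O(k^3n^{-7/4}e^{\mu(n)})$ exactly when $k\ll n^{1/8}$, which is the same place the exponent $\frac{1}{8}$ is spent in the paper. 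Indeed, your alternating moment identities are the combinatorial shadow of the paper's linearization: the factor $(-1)^k2\pi ik\tau$ near $q=1$ encodes precisely the vanishing zeroth moment and the first moment $(-1)^kk$. What your route buys is economy: no contour integration, only the classical expansion of $p(n)$ as a black box, with $p-\phi=O(e^{\mu/2})$ harmlessly summed over $2k$ terms. What the paper's route buys is self-containedness (only the $\eta$-transformation and Wright's Bessel lemma are imported) and a template that still applies when the coefficient under study is not a short linear combination of a function with a known Rademacher-type expansion.
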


\begin{remark}
Here the assumption $k\ll n^{\frac{1}{8}-\epsilon}$ ensures that $O\left(k^3n^{-\frac{7}{4}}e^{\frac{2\pi\sqrt{n}}{\sqrt{6}}}\right)$ is indeed an error term.
\end{remark}

Apparently, \eqref{eq:main} demonstrates the positivity of $M_k(n)$ for sufficiently large $n$ if we fix $k$. In fact, this asymptotic formula allows us to have a better understanding of $M_k(n)$. The interested reader may also compare \eqref{eq:main} with the celebrated asymptotic expression for $p(n)$ due to Hardy and Ramanujan \cite{HR1918}
$$p(n)\sim \frac{1}{4\sqrt{3}}n^{-1}e^{\frac{2\pi\sqrt{n}}{\sqrt{6}}}.$$

\section{Proof}\label{sect:proof}

Throughout this section, we let $q=e^{2\pi i\tau}$ with $\tau=x+iy\in\mathbb{H}$ (i.e. $y>0$). We also put
$$y=\frac{1}{2\sqrt{6n}}\qquad\text{and}\qquad M=\sqrt{\left(\frac{12}{12-\pi^2}\right)^2-1}.$$
Note that we may take $M$ to be other (positive) absolute constant. However, we choose the above value for computational convenience.

\subsection{Asymptotics of $\cm_k(q)$ near $q=1$}

We first estimate $\cm_k(q)$ near $q=1$.

\begin{lemma}\label{le:near1}
For $|x|\le My$, we have, as $n\to\infty$ (and hence $y\to 0^+$),
\begin{equation}\label{eq:near1}
\cm_k(q)=-2e^{\frac{\pi i}{4}}\pi k\tau^{\frac{3}{2}}e^{\frac{\pi i}{12\tau}}+O\left(k^3 n^{-\frac{5}{4}} e^{\frac{\pi \sqrt{n}}{\sqrt{6}}}\right).
\end{equation}
\end{lemma}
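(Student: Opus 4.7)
The plan is to combine two ingredients starting from the finite-sum representation (1.2): first the modular transformation of the Dedekind eta function to handle $1/(q;q)_\infty$, and then a Taylor expansion in $\tau$ of the truncated pentagonal sum. From $\eta(-1/\tau)=\sqrt{-i\tau}\,\eta(\tau)$ applied to $\eta(\tau)=q^{1/24}(q;q)_\infty$, I obtain
\[
\frac{1}{(q;q)_\infty} = \sqrt{-i\tau}\,e^{\pi i\tau/12}\,e^{\pi i/(12\tau)}\,\frac{1}{(\tilde{q};\tilde{q})_\infty},\qquad \tilde{q}=e^{-2\pi i/\tau}.
\]
On the arc $|x|\le My$, the identity $\mathrm{Im}(-1/\tau)=y/|\tau|^2\ge 1/((1+M^2)y)$ yields $|\tilde{q}|\le e^{-c\sqrt{n}}$ for some absolute $c>0$ (the specific value of $M$ in the preamble is chosen only to clean up this constant), so $1/(\tilde{q};\tilde{q})_\infty=1+O(e^{-c\sqrt{n}})$ contributes a negligible perturbation, as does $e^{\pi i\tau/12}=1+O(\tau)=1+O(n^{-1/2})$.

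Next, I would Taylor-expand the sum in $\tau$. Rewriting $q^{j(3j+1)/2}(1-q^{2j+1})=q^{j(3j+1)/2}-q^{(3j^2+5j+2)/2}$, the two exponents differ by exactly $-(2j+1)$, so the first-order contribution is
\[
-2\pi i\tau\sum_{j=0}^{k-1}(-1)^j(2j+1) = -2\pi i(-1)^{k-1}k\tau.
\]
For higher orders, applying $a^m-b^m=(a-b)(a^{m-1}+\cdots+b^{m-1})$ extracts the linear factor $a-b=-(2j+1)$, so each $m$-th order Taylor contribution becomes an alternating sum of a polynomial of degree $2m-1$ in $j$, bounded by $O(k^{2m-1})$. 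The tail $\sum_{m\ge 2}O(|\tau|^mk^{2m-1})$ is geometric since $|\tau|k^2\ll 1$ under the hypothesis $k\ll n^{1/8-\epsilon}$, giving
\[
\sum_{j=0}^{k-1}(-1)^j q^{j(3j+1)/2}(1-q^{2j+1}) = -2\pi i(-1)^{k-1}k\tau + O\bigl(|\tau|^2k^3\bigr).
\]

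Multiplying by $(-1)^{k-1}/(q;q)_\infty$, the leading term becomes
\[
-2\pi ik\tau\cdot\sqrt{-i\tau}\,e^{\pi i/(12\tau)} = -2e^{\pi i/4}\pi k\tau^{3/2}e^{\pi i/(12\tau)},
\]
via $\tau\sqrt{-i\tau}=e^{-\pi i/4}\tau^{3/2}$ and $-2\pi i\,e^{-\pi i/4}=-2\pi e^{\pi i/4}$. The three residual errors—the $O(|\tau|^2k^3)$ Taylor remainder scaled by $|(q;q)_\infty^{-1}|=O(|\tau|^{1/2}e^{\pi\sqrt{n}/\sqrt{6}})$, the $O(\tau)$ from $e^{\pi i\tau/12}-1$ acting on the main term, and the exponentially small $O(e^{-c\sqrt{n}})$ from the $\tilde{q}$-product—all absorb into $O(k^3n^{-5/4}e^{\pi\sqrt{n}/\sqrt{6}})$ since $|\tau|\asymp y\asymp n^{-1/2}$ on the arc. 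The principal obstacle is the second step: a naive term-by-term Taylor bound would give $O(|\tau|^2k^5)$, which would overwhelm the stated error; the improvement to $k^3$ requires exploiting the specific algebraic structure $a_j-b_j=-(2j+1)$ of the pair of pentagonal exponents, so that each $m$-th order Taylor layer loses a linear factor of $j$ before the alternating summation is performed.
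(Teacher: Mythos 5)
Your proposal is correct and follows essentially the same route as the paper: invert the Dedekind eta function to extract $\sqrt{-i\tau}\,e^{\pi i/(12\tau)}$ from $1/(q;q)_\infty$ on the arc $|x|\le My$, linearize the finite alternating sum in $\tau$, and multiply the two estimates. The one substantive difference is in how the finite sum is handled. The paper expands only the factor $1-q^{2j+1}=-2(2j+1)\pi i\tau+\mathcal{E}_j$ with $|\mathcal{E}_j|\le 4(2j+1)^2\pi^2|\tau|^2$ and sums these bounds termwise to reach $O(k^3|\tau|^2)$ with no appeal to cancellation; in doing so it passes from $q^{j(3j+1)/2}\bigl(-2(2j+1)\pi i\tau\bigr)$ to $-2(2j+1)\pi i\tau$ without bookkeeping the discrepancy $(q^{j(3j+1)/2}-1)\cdot 2(2j+1)\pi i\tau=O(j^3|\tau|^2)$, which a trivial summation only controls by $O(k^4|\tau|^2)$. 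Your expansion of the full difference $q^{a_j}-q^{b_j}$ (with $a_j=j(3j+1)/2$, $b_j=a_j+2j+1$), pulling the factor $a_j-b_j=-(2j+1)$ out of every Taylor layer and then invoking the alternating cancellation $\sum_{j<k}(-1)^jP(j)=O(k^{\deg P})$, recovers the $O(k^3|\tau|^2)$ bound rigorously; the only detail worth writing out is that the coefficients of the degree-$(2m-1)$ polynomials carry a factor of order $C^m/(m-1)!$, so the sum over $m\ge 2$ is genuinely dominated by the $m=2$ layer once $|\tau|k^2\ll 1$. In this one step --- the step where the $k$-dependence of the error is actually decided --- your write-up is tighter than the paper's; everything else (the bound $\Im(-1/\tau)\le 1/y$ for the size of $e^{\pi i/(12\tau)}$, the absorption of the $e^{\pi i\tau/12}$ and $(\tilde q;\tilde q)_\infty$ corrections, and the evaluation $\tau\sqrt{-i\tau}=e^{-\pi i/4}\tau^{3/2}$ giving the constant $-2e^{\pi i/4}\pi k$) matches the paper.
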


\begin{proof}
We have
$$1-q^{2j+1}=1-e^{2(2j+1)\pi i\tau}=-2(2j+1)\pi i\tau +\mathcal{E}_j,$$
where
\begin{align*}
|\mathcal{E}_j|&=\left|e^{2(2j+1)\pi i\tau}-1-2(2j+1)\pi i\tau\right|\\
&\le e^{|2(2j+1)\pi i\tau|}-1-|2(2j+1)\pi i\tau|\\
&\le 4(2j+1)^2\pi^2 |\tau|^2,
\end{align*}
since $|2(2j+1)\pi i\tau|<1$ for $0\le j\le k-1$ (which is ensured by the assumption $k\ll n^{\frac{1}{8}-\epsilon}$) whereas $e^x-1-x\le x^2$ when $0<x<1$. Hence,
\begin{align*}
\sum_{j=0}^{k-1} (-1)^j q^{j(3 j+1)/2} (1-q^{2j+1})&=\sum_{j=0}^{k-1}(-1)^j\big(-2(2j+1)\pi i\tau\big)+\mathcal{E}\\
&=(-1)^k 2\pi i k\tau+\mathcal{E},
\end{align*}
where
$$|\mathcal{E}|\le\sum_{j=0}^{k-1} 4(2j+1)^2\pi^2|\tau|^2\ll k^3y^2.$$
Consequently, we have
\begin{equation}\label{eq:near1-part1}
\sum_{j=0}^{k-1} (-1)^j q^{j(3 j+1)/2} (1-q^{2j+1})=(-1)^k 2\pi i k\tau+O(k^3 y^2).
\end{equation}

Furthermore, we know from the modular inversion formula for Dedekind's eta-function (cf.~\cite[p.~121,  Proposition 14]{Kob1984}) that
\begin{equation}\label{eq:eta-trans}
(q;q)_\infty = \frac{1}{\sqrt{-i\tau}} e^{-\frac{\pi i\tau}{12}-\frac{\pi i}{12\tau}}\bigg(1+O\left(e^{-\frac{2\pi i}{\tau}}\right)\bigg),
\end{equation}
where the square root is taken on the principal branch, with $z^{1/2}>0$ for $z>0$. Hence,
\begin{equation}\label{eq:near1-part2}
\frac{1}{(q;q)_\infty}=\sqrt{-i\tau}e^{\frac{\pi i}{12\tau}}+O\left(y^{\frac{3}{2}}e^{\frac{\pi}{12}\Im\left(\frac{-1}{\tau}\right)}\right).
\end{equation}

Finally, \eqref{eq:near1} follows from \eqref{eq:gf1}, \eqref{eq:near1-part1}, \eqref{eq:near1-part2} and the fact that
$$\Im\left(\frac{-1}{\tau}\right)=\frac{y}{x^2+y^2}\le \frac{1}{y}.$$
This finishes the proof of Lemma \ref{le:near1}.
\end{proof}

\subsection{Asymptotics of $\cm_k(q)$ away from $q=1$}

We next estimate $\cm_k(q)$ away from $q=1$.

\begin{lemma}\label{le:away1}
For $My<|x|\le \frac{1}{2}$, we have, as $n\to\infty$ (and hence $y\to 0^+$),
\begin{equation}\label{eq:away1}
\cm_k(q)\ll k n^{-\frac{1}{4}}e^{\frac{\pi \sqrt{n}}{2\sqrt{6}}}.
\end{equation}
\end{lemma}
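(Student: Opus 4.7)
The plan is to bound $\cm_k(q)$ by controlling the two factors of \eqref{eq:gf1} separately. For the finite pentagonal sum $S_k(q):=\sum_{j=0}^{k-1}(-1)^j q^{j(3j+1)/2}(1-q^{2j+1})$, the triangle inequality together with $|q|=e^{-2\pi y}\le 1$ yields the trivial estimate $|S_k(q)|\le 2k$. No cancellation or linearization is needed here, because the exponential slack available in the target bound will easily absorb this crude $O(k)$ factor.

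For the reciprocal $1/(q;q)_\infty$, I would reuse \eqref{eq:near1-part2} (which is valid throughout the upper half-plane, not merely near $q=1$) to obtain
$$\Big|\tfrac{1}{(q;q)_\infty}\Big|\ll |\tau|^{1/2}\exp\Big(\tfrac{\pi y}{12(x^2+y^2)}\Big).$$
The decisive use of the hypothesis $|x|>My$ is that $x^2+y^2>(M^2+1)y^2$, so $\Im(-1/\tau)<1/((M^2+1)y)$. Together with the bound $|\tau|^{1/2}\le(1/4+y^2)^{1/4}\ll 1$ and the sum estimate above, this yields
$$|\cm_k(q)|\ll k\exp\Big(\tfrac{\pi}{12(M^2+1)y}\Big).$$

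It remains to compare this with the target $kn^{-1/4}\exp(\pi\sqrt{n}/(2\sqrt{6}))=kn^{-1/4}\exp(\pi/(24y))$. Substituting $1/y=2\sqrt{6n}$, our exponent equals $\pi\sqrt{6n}/(6(M^2+1))$ while the target exponent is $\pi\sqrt{6n}/12$. The specific choice $M^2+1=(12/(12-\pi^2))^2$, combined with the elementary numerical check $(12-\pi^2)^2<72$, gives $1/(M^2+1)<1/2$, so our bound is a factor $\exp(-c\sqrt{n})$ smaller than the target for some absolute $c>0$. This super-polynomial gap painlessly swallows the polynomial factor $n^{-1/4}$, completing the argument.

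The only point that requires any care is the numerical verification $M^2+1>2$, i.e.\ $1/(M^2+1)<1/2$; any absolute constant $M>1$ would in fact suffice, and the paper's explicit value of $M$ is presumably tuned for downstream use. No conceptually new ideas beyond those already employed in Lemma~\ref{le:near1} are needed.
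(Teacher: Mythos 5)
There is a genuine gap at the heart of your argument: the claim that \eqref{eq:near1-part2} ``is valid throughout the upper half-plane'' is false, and it fails precisely in the region $My<|x|\le\frac12$ where you invoke it. The transformation \eqref{eq:eta-trans} is exact only up to the factor $\big(e^{-2\pi i/\tau};e^{-2\pi i/\tau}\big)_\infty$, and passing to \eqref{eq:near1-part2} requires this factor to be $1+o(1)$, i.e.\ requires $\Im(-1/\tau)=y/(x^2+y^2)$ to be \emph{large}. That holds for $|x|\le My$ (where $y/(x^2+y^2)\ge 1/((1+M^2)y)\to\infty$), but in the complementary range it can tend to $0$: at $x=\pm\frac12$ one has $y/(x^2+y^2)\approx 4y$. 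There the true size of the generating function is governed by the root of unity $q=-1$, namely $\big|1/(q;q)_\infty\big|\asymp y^{1/2}e^{\pi/(48y)}$ up to constants, which is exponentially larger than your claimed bound $\ll\exp\big(\pi y/(12(x^2+y^2))\big)=O(1)$ at that point. So the displayed inequality you rely on is simply wrong where you need it (even though, as it happens, $e^{\pi/(48y)}$ is still below the target $e^{\pi/(24y)}$ --- the conclusion is true, but your derivation does not establish it, and for intermediate $x$ you have no control at all).

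The paper's proof supplies exactly the missing ingredient: it writes $\Log\big(1/(q;q)_\infty\big)=\sum_{m\ge1}q^m/\big(m(1-q^m)\big)$, majorizes this termwise by the corresponding sum at $|q|$ (which \emph{is} on the imaginary axis, where \eqref{eq:eta-trans} applies and gives $e^{\pi/(12y)}$), and then recovers an exponential saving from the $m=1$ term alone, using $|1-q|> 2\pi y\sqrt{1+M^2}+O(y^2)$ versus $1-|q|=2\pi y+O(y^2)$. This yields the exponent $\frac{1}{y}\big(\frac{\pi}{12}-\frac{1}{2\pi}(1-\frac{1}{\sqrt{1+M^2}})\big)$, which equals $\frac{\pi}{24y}=\frac{\pi\sqrt{n}}{2\sqrt{6}}$ for the paper's specific $M$. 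Note also that your side remark that ``any $M>1$ would suffice'' is an artifact of the false bound: in the correct argument the stated exponent forces $\sqrt{1+M^2}\ge 12/(12-\pi^2)$, i.e.\ $M\approx 5.55$ at least, which is why the paper tunes $M$ to that exact value. Your treatment of the finite sum $S_k(q)$ via the trivial bound $|S_k(q)|\le 2k$ does agree with the paper and is fine.
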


\begin{proof}
We first have the following trivial bound
\begin{equation}\label{eq:away1-part1}
\left|\sum_{j=0}^{k-1} (-1)^j q^{j(3 j+1)/2} (1-q^{2j+1})\right|\le 2k.
\end{equation}

On the other hand,
\begin{align*}
\Log\left(\frac{1}{(q;q)_\infty}\right)&=-\sum_{n\ge 1}\Log(1-q^n)\\
&=\sum_{n\ge 1}\sum_{m\ge 1}\frac{q^{nm}}{m}\\
&=\sum_{m\ge 1}\frac{q^m}{m(1-q^m)}.
\end{align*}
Hence,
\begin{align}
\left|\Log\left(\frac{1}{(q;q)_\infty}\right)\right|&\le \sum_{m\ge 1}\frac{|q|^m}{m|1-q^m|}\nonumber\\
&\le \sum_{m\ge 1}\frac{|q|^m}{m(1-|q|^m)}-\frac{|q|}{1-|q|}+\frac{|q|}{|1-q|}\nonumber\\
&=\Log\left(\frac{1}{(|q|;|q|)_\infty}\right)-|q|\left(\frac{1}{1-|q|}-\frac{1}{|1-q|}\right).\label{eq:away1-part21}
\end{align}
It follows from \eqref{eq:eta-trans} that
\begin{equation}\label{eq:away1-part22}
\frac{1}{(|q|;|q|)_\infty}=\sqrt{y} e^{\frac{\pi}{12y}}\Bigg(1+O\left(e^{-\frac{2\pi}{y}}\right)\Bigg).
\end{equation}
Furthermore, we know from the fact $|x|>My$ that $\cos(2\pi x)<\cos(2\pi My)\le 1$. Hence,
\begin{align*}
|1-q|^2&= 1-2e^{-2\pi y} \cos(2\pi x)+e^{-4\pi y}\\
&>1-2e^{-2\pi y} \cos(2\pi My)+e^{-4\pi y}.
\end{align*}
Computing the Taylor expansion around $y=0$ yields
\begin{equation}\label{eq:away1-part23}
|1-q|>2\pi y\sqrt{1+M^2}+O(y^2).
\end{equation}
Using the fact $1-|q|=2\pi y+O(y^2)$ and combining \eqref{eq:away1-part21}, \eqref{eq:away1-part22} and \eqref{eq:away1-part23}, we conclude that
\begin{equation}\label{eq:away1-part2}
\left|\frac{1}{(q;q)_\infty}\right|\ll \sqrt{y} \exp\Bigg(\frac{1}{y}\left(\frac{\pi}{12}-\frac{1}{2\pi} \left(1-\frac{1}{\sqrt{1+M^2}}\right)\right)\Bigg).
\end{equation}

Finally, \eqref{eq:away1} follows from \eqref{eq:away1-part1} and \eqref{eq:away1-part2}.
\end{proof}

\subsection{Applying Wright's circle method}

Let $\mathcal{C}$ denote the circle $q=e^{2\pi i\tau}=e^{2\pi i (x+iy)}$ where $x\in[-\frac{1}{2},\frac{1}{2}]$. Cauchy's integral formula tells us that
\begin{align}
M_k(n)&=\frac{1}{2\pi i}\oint_{\mathcal{C}} \frac{\cm_k(q)}{q^{n+1}} \;dq\nonumber\\
&=\int_{-\frac{1}{2}}^{\frac{1}{2}}\cm_k\left(e^{2\pi i\tau}\right)e^{-2n\pi i\tau}\;dx\nonumber\\
&=\int_{|x|\le My}+\int_{My<|x|\le \frac{1}{2}} =:\mathcal{I}_1+\mathcal{I}_2,
\end{align}
where the integrands in $\mathcal{I}_1$ and $\mathcal{I}_2$ are both $\cm_k\left(e^{2\pi i\tau}\right)e^{-2n\pi i\tau}$.

We first compute $\mathcal{I}_1$, which contributes to the main term. Our evaluation relies on a function $P_s(u)$ defined by Wright \cite{Wri1934}. For fixed $M>0$ and $u\in\mathbb{R}_{>0}$, let
$$P_s(u):=\frac{1}{2\pi i}\int_{1-Mi}^{1+Mi} v^s e^{u\left(v+\frac{1}{v}\right)}\;dv.$$
Wright \cite[p.~138, Lemma XVII]{Wri1934} showed that this function can be rewritten in terms of the $I$-Bessel function up to an error term.

\begin{lemma}[Wright]
We have, as $u\to\infty$,
\begin{equation}
P_s(u)=I_{-s-1}(2u)+O(e^u),
\end{equation}
where $I_\ell$ denotes the usual $I$-Bessel function of order $\ell$.
\end{lemma}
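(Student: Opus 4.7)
The plan is to express both $P_s(u)$ and $I_{-s-1}(2u)$ as contour integrals of the same integrand $v^s e^{u(v+1/v)}$ and then to reduce the claim to a wing estimate. The starting ingredient is the Schl\"afli--Hankel representation
$$I_\nu(2u)=\frac{1}{2\pi i}\int_{\mathcal{H}}v^{-\nu-1}e^{u(v+1/v)}\,dv,$$
in which $\mathcal{H}$ is a Hankel contour running from $-\infty-i0$, counterclockwise around the origin, to $-\infty+i0$. This follows from the Bessel power series together with Hankel's formula $1/\Gamma(z)=(2\pi i)^{-1}\int_{\mathcal{H}}e^t t^{-z}\,dt$, applied termwise. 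Substituting $\nu=-s-1$ produces the integrand of $P_s(u)$, but with contour $\mathcal{H}$ in place of the vertical segment.

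Because $v^s e^{u(v+1/v)}$ is holomorphic on $\mathbb{C}\setminus(-\infty,0]$ with the principal branch of $v^s$, $\mathcal{H}$ can be continuously deformed into $\mathcal{W}^-\cup\mathcal{V}\cup\mathcal{W}^+$, where $\mathcal{V}$ is Wright's vertical segment from $1-Mi$ to $1+Mi$, and $\mathcal{W}^\pm$ are wings joining $1\pm Mi$ back to the Hankel endpoints at $-\infty\pm i0$. By Cauchy's theorem the $\mathcal{V}$-contribution is precisely $P_s(u)$, so
$$I_{-s-1}(2u)-P_s(u)=\frac{1}{2\pi i}\left(\int_{\mathcal{W}^-}+\int_{\mathcal{W}^+}\right)v^s e^{u(v+1/v)}\,dv,$$
and the lemma reduces to showing $\int_{\mathcal{W}^\pm}=O(e^u)$.

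The geometric input for the wing estimate is the shape of the level sets of $\mathrm{Re}(v+1/v)$. The function has a non-degenerate saddle at $v=1$ of value $2$, and the level curve $\{\mathrm{Re}(v+1/v)=1\}$ consists of two arcs lying entirely in the strip $0<\mathrm{Re}(v)<1$, passing through $e^{\pm i\pi/3}$ and asymptoting to the line $\mathrm{Re}(v)=1$ as $\mathrm{Im}(v)\to\pm\infty$. I would route each wing $\mathcal{W}^\pm$ by a short connector from $1\pm Mi$ onto one of these level arcs, follow the arc back toward the origin (where it approaches $v=0$ tangent to the imaginary axis), make a small detour around the branch point $v=0$, and then continue through the left half-plane to $-\infty$. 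On the portion of each wing lying in $\{\mathrm{Re}(v+1/v)\le 1\}$, which is the bulk of the wing, one has $|e^{u(v+1/v)}|\le e^u$ and the factor $|v|^s$ produces a convergent integral, contributing $O(e^u)$.

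The main obstacle is the short connector from $1\pm Mi$, where $\mathrm{Re}(v+1/v)=1+1/(1+M^2)>1$, onto the level curve: the naive length-times-maximum estimate there gives only $O(e^{(1+1/(1+M^2))u})$. Obtaining the sharp $O(e^u)$ bound requires a more delicate argument, either by following a steepest-descent path out of $v=1\pm Mi$ so that the exponentially-damped weight $e^{-u\alpha\ell}$ swallows the localised excess, or by exploiting oscillatory cancellation from $e^{iu\,\mathrm{Im}(v+1/v)}$ along the connector. This is precisely the technical step carried out in Wright's original proof of Lemma XVII in \cite{Wri1934}; once it is in hand, the remaining estimates and the final assembly into $P_s(u)=I_{-s-1}(2u)+O(e^u)$ are routine.
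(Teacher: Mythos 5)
The paper does not actually prove this lemma: it is quoted directly from Wright \cite[p.~138, Lemma XVII]{Wri1934}, so there is no internal argument to measure your proposal against, and the relevant benchmark is Wright's contour-deformation proof. Your skeleton is the right one and essentially Wright's: the Schl\"afli--Hankel representation $I_{-s-1}(2u)=\frac{1}{2\pi i}\int_{\mathcal{H}}v^{s}e^{u(v+1/v)}\,dv$, deformation of $\mathcal{H}$ through the segment from $1-Mi$ to $1+Mi$, and an estimate on the two wings; your description of the level curve $\{\mathrm{Re}(v+1/v)=1\}$ (through $e^{\pm i\pi/3}$, contained in $0\le\mathrm{Re}(v)<1$, asymptotic to $\mathrm{Re}(v)=1$) is also correct. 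But the proposal is not a proof: the single nontrivial step --- the connector out of $1\pm Mi$, where $\mathrm{Re}(v+1/v)=1+\frac{1}{1+M^{2}}>1$ --- is exactly the content of the lemma, and you explicitly defer it to the reference.

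More importantly, that step cannot be closed in the form you hope for. Along any wing $\mathcal{W}^{+}$ starting at $v_{0}=1+Mi$ one has $f(v_{0})\ne 2$ and $f'(v_{0})\neq 0$ for $f(v)=v+1/v$, so a single integration by parts (the endpoint case of the saddle-point method) gives $\int_{\mathcal{W}^{+}}v^{s}e^{uf(v)}\,dv=\frac{c}{u}\,e^{uf(v_{0})}+\cdots$ with $c=\frac{v_{0}^{s}}{2\pi i\,f'(v_{0})}\neq 0$ up to sign and orientation. Neither a steepest-descent routing out of $v_{0}$ nor oscillatory cancellation can remove this endpoint term; both devices only gain powers of $u$, never an exponential. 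Consequently $P_{s}(u)-I_{-s-1}(2u)$ has modulus of exact order $u^{-1}e^{(1+\frac{1}{1+M^{2}})u}$ along a sequence of $u\to\infty$, which is not $O(e^{u})$; the bound you are trying to reach is stronger than what is true for the vertical-segment definition of $P_{s}$. What you can prove --- by the trivial length-times-maximum bound on horizontal connectors from $1\pm Mi$ to $\pm Mi$ together with the remainder of the Hankel contour, on which $\mathrm{Re}(v+1/v)\le 0$ --- is $P_{s}(u)=I_{-s-1}(2u)+O\left(e^{(1+\frac{1}{1+M^{2}})u}\right)$. Since $1+\frac{1}{1+M^{2}}<2$, this weaker error is still exponentially smaller than the main term $e^{2u}/\sqrt{4\pi u}$ and is absorbed by the error term in \eqref{eq:I1}, so it suffices for every use of the lemma in Section~\ref{sect:proof}; it is the form in which the estimate should be stated and proved. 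In short: correct strategy, but the key estimate is both left to the citation and aimed at a bound that no choice of wing can deliver.
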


\noindent We also recall that the asymptotic expansion of $I_\ell(x)$ (cf.~\cite[p.~377, (9.7.1)]{AA1972}) states that, for fixed $\ell$, when $|\arg x|<\frac{\pi}{2}$,
\begin{align}\label{Bessel-order}
I_{\ell}(x)\sim \frac{e^x}{\sqrt{2\pi x}}\left(1-\frac{4\ell^2-1}{8x}+\frac{(4\ell^2-1)(4\ell^2-9)}{2!(8x)^2}-\cdots \right).
\end{align}

It follows from Lemma \ref{le:near1} that
$$\mathcal{I}_1=\int_{|x|\le My} e^{-2n\pi i\tau}\Bigg(-2e^{\frac{\pi i}{4}}\pi k\tau^{\frac{3}{2}}e^{\frac{\pi i}{12\tau}}+O\left(k^3 n^{-\frac{5}{4}} e^{\frac{\pi \sqrt{n}}{\sqrt{6}}}\right)\Bigg)\;dx.$$
Making the change of variables $v=-i\tau/y$ yields
\begin{align}
\mathcal{I}_1&=\int_{1-Mi}^{1+Mi}(-iy)e^{2n\pi yv}\Bigg(-2e^{\frac{\pi i}{4}}\pi k (iyv)^{\frac{3}{2}} e^{\frac{\pi}{12yv}}+O\left(k^3 n^{-\frac{5}{4}} e^{\frac{\pi \sqrt{n}}{\sqrt{6}}}\right)\Bigg)\; dv\nonumber\\
&=2^{-\frac{7}{4}} 3^{-\frac{5}{4}} \pi^2 k n^{-\frac{5}{4}}P_{\frac{3}{2}}\left(\frac{\pi \sqrt{n}}{\sqrt{6}}\right)+O\left(k^3 n^{-\frac{7}{4}} e^{\frac{2\pi \sqrt{n}}{\sqrt{6}}}\right)\nonumber\\
&=2^{-\frac{7}{4}} 3^{-\frac{5}{4}} \pi^2 k n^{-\frac{5}{4}} I_{-\frac{5}{2}}\left(\frac{2\pi \sqrt{n}}{\sqrt{6}}\right)+O\left(k^3 n^{-\frac{7}{4}} e^{\frac{2\pi \sqrt{n}}{\sqrt{6}}}\right)\nonumber\\
&=\frac{\pi}{12\sqrt{2}}kn^{-\frac{3}{2}}e^{\frac{2\pi\sqrt{n}}{\sqrt{6}}}+O\left(k^3n^{-\frac{7}{4}}e^{\frac{2\pi\sqrt{n}}{\sqrt{6}}}\right).\label{eq:I1}
\end{align}

We now evaluate $\mathcal{I}_2$. It follows from Lemma \ref{le:away1} that
\begin{align}
\mathcal{I}_2\ll \int_{My<|x|\le \frac{1}{2}} k n^{-\frac{1}{4}}e^{\frac{\pi \sqrt{n}}{2\sqrt{6}}} e^{\frac{\pi \sqrt{n}}{\sqrt{6}}}\;dx \ll k n^{-\frac{1}{4}}e^{\frac{3\pi \sqrt{n}}{2\sqrt{6}}}.\label{eq:I2}
\end{align}

Consequently, we know from \eqref{eq:I1} and \eqref{eq:I2} that as $n\to\infty$,
$$M_k(n)=\frac{\pi}{12\sqrt{2}}kn^{-\frac{3}{2}}e^{\frac{2\pi\sqrt{n}}{\sqrt{6}}}+O\left(k^3n^{-\frac{7}{4}}e^{\frac{2\pi\sqrt{n}}{\sqrt{6}}}\right).$$
This is our main result.

\section{Closing remarks}

There are more truncated theta series identities. Two interesting examples are due to Guo and Zeng \cite{GZ2013}:
\begin{equation}
\frac{(-q;q)_\infty}{(q;q)_\infty}\sum_{j=-k}^k (-1)^j q^{j^2} = 1+ (-1)^k \sum_{n\ge k+1} \frac{(-q;q)_k (-1;q)_{n-k} q^{(k+1)n}}{(q;q)_n}\qbinom{n-1}{k}_q,
\end{equation}
and
\begin{align}
&\frac{(-q;q^2)_\infty}{(q^2;q^2)_\infty}\sum_{j=0}^{k-1}(-1)^j q^{j(2j+1)}(1-q^{2j+1})\nonumber\\
&\qquad=1+(-1)^{k-1}\sum_{n\ge k}\frac{(-q;q^2)_k (-q;q^2)_{n-k} q^{2(k+1)n-k}}{(q^2;q^2)_n}\qbinom{n-1}{k-1}_{q^2}.
\end{align}

Let $\op(n)$ denote the number of overpartitions of $n$ (i.e.~partitions of $n$ where the first occurrence of each distinct part may be overlined) and let $\pd(n)$ denote the number of partitions of $n$ wherein odd parts are not repeated. The above two identities respectively reveal the non-negativity of the following two functions (the notation of which is due to Andrews and Merca \cite{AM2018}) for $n,k\ge 1$:
\begin{equation}
\oM_k(n):=(-1)^k \sum_{j=-k}^k (-1)^j \op(n-j^2),
\end{equation}
and
\begin{equation}
MP_k(n):=(-1)^{k-1} \sum_{j=0}^{k-1} (-1)^j\big(\pd(n-j(2j+1))-\pd(n-(j+1)(2j+1))\big).
\end{equation}
In \cite{AM2018}, to answer a question of Guo and Zeng \cite[p.~702]{GZ2013}, Andrews and Merca also presented the partition-theoretic interpretations of $\oM_k(n)$ and $MP_k(n)$:
\begin{itemize}[leftmargin=0.15in]
\item $\oM_k(n)$ denotes the number of overpartitions of $n$ in which the first part larger than $k$
appears at least $k + 1$ times;
\item $MP_k(n)$ denotes the number of partitions of $n$ in which the first part larger than $2k - 1$ is odd and appears exactly $k$ times whereas all other odd parts appear at most once.
\end{itemize}

Using similar arguments to that in Sect.~\ref{sect:proof}, we are also able to show the asymptotic behaviors of $\oM_k(n)$ and $MP_k(n)$.

\begin{theorem}
Let $\epsilon>0$ be arbitrary small. Then as $n\to\infty$, we have, for $k\ll n^{\frac{1}{12}-\epsilon}$,
\begin{equation}
\oM_k(n)=\frac{1}{8}n^{-1}e^{\pi\sqrt{n}}+O\left(k^3n^{-\frac{5}{4}}e^{\pi \sqrt{n}}\right).
\end{equation}
\end{theorem}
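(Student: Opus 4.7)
The plan is to mirror the proof of Theorem 1.1 step by step, with the generating function of $\op(n)$ replacing that of $p(n)$. From the first Guo--Zeng identity in Section 3, the generating function of interest is
$$\overline{\cm}_k(q):=\sum_{n\ge 0}\oM_k(n)q^n=(-1)^k\frac{(-q;q)_\infty}{(q;q)_\infty}\sum_{j=-k}^{k}(-1)^j q^{j^2}.$$
Since the relevant growth is $\op(n)\sim\tfrac{1}{8n}e^{\pi\sqrt n}$, the natural saddle parameter becomes $y=\tfrac{1}{4\sqrt n}$ (rather than $\tfrac{1}{2\sqrt{6n}}$), while $M$ is kept as a positive absolute constant chosen large enough for the tail bound. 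The key preliminary is the modular expression
$$\frac{(-q;q)_\infty}{(q;q)_\infty}=\frac{1}{\sqrt 2}\sqrt{-i\tau}\,e^{\pi i/(8\tau)}\bigl(1+O(e^{-\pi/\Im\tau})\bigr),$$
which I would extract by applying \eqref{eq:eta-trans} to both $(q;q)_\infty$ and $(q^2;q^2)_\infty$ and using $(-q;q)_\infty=(q^2;q^2)_\infty/(q;q)_\infty$.

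Next I would prove analogues of Lemmas \ref{le:near1} and \ref{le:away1}. For the near-$1$ estimate ($|x|\le My$), note that $\sum_{j=-k}^{k}(-1)^j=(-1)^k$ and, by the Taylor bound $|q^{j^2}-1|=|e^{2\pi i j^2\tau}-1|\ll j^2 y$ (uniformly for $|j|\le k$, which is valid since $k^2 y\ll n^{1/6-2\epsilon-1/2}\to 0$ under the hypothesis $k\ll n^{1/12-\epsilon}$),
$$\sum_{j=-k}^{k}(-1)^j q^{j^2}=(-1)^k+O(k^3 y).$$
Multiplying by the modular expression above yields
$$\overline{\cm}_k(q)=\frac{1}{\sqrt 2}\sqrt{-i\tau}\,e^{\pi i/(8\tau)}+O\bigl(k^3 y^{3/2}e^{\pi/(8y)}\bigr).$$
For the away estimate ($My<|x|\le\tfrac12$), the trivial bound $\bigl|\sum_{j=-k}^{k}(-1)^j q^{j^2}\bigr|\le 2k+1$, together with $|(-q;q)_\infty|\le(-|q|;|q|)_\infty$ and the same $\cos(2\pi x)<\cos(2\pi My)$ argument used in the proof of Lemma \ref{le:away1}, gives
$$\overline{\cm}_k(q)\ll k\sqrt y\,\exp\!\Bigl(\tfrac{1}{y}\bigl(\tfrac{\pi}{8}-c\bigr)\Bigr)$$
for some positive constant $c=c(M)$.

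The final step applies Wright's circle method to $\oM_k(n)=\mathcal{I}_1+\mathcal{I}_2$ as in Section \ref{sect:proof}. The change of variables $v=-i\tau/y$ in the main part of $\mathcal{I}_1$ produces
$$\sqrt 2\,\pi\,y^{3/2}\,P_{1/2}\!\left(\tfrac{\pi\sqrt n}{2}\right)=\sqrt 2\,\pi\,y^{3/2}\bigl(I_{-3/2}(\pi\sqrt n)+O(e^{\pi\sqrt n/2})\bigr),$$
and the asymptotic \eqref{Bessel-order} for $I_{-3/2}$ delivers the claimed main term $\tfrac{1}{8n}e^{\pi\sqrt n}$. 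The error portion of $\mathcal{I}_1$ is bounded by length times maximum to give $O(k^3 n^{-5/4}e^{\pi\sqrt n})$, and $\mathcal{I}_2$ is exponentially smaller and is absorbed. I do not anticipate a genuinely new obstacle: the principal delicacies are purely bookkeeping, namely (i) verifying that the constant $\tfrac{1}{8}$ emerges correctly from $\sqrt 2\pi y^{3/2}/\sqrt{2\pi\cdot\pi\sqrt n}$ at $y=\tfrac{1}{4\sqrt n}$, and (ii) confirming that the assumption $k\ll n^{1/12-\epsilon}$ is exactly what makes $k^3 n^{-5/4}$ asymptotically smaller than $n^{-1}$, so that the error in \eqref{eq:main} analogue is genuine.
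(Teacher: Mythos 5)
Your proposal is correct and is exactly the route the paper intends: the paper offers no details beyond ``using similar arguments to that in Sect.~\ref{sect:proof},'' and your adaptation --- the modular factor $\frac{1}{\sqrt{2}}\sqrt{-i\tau}\,e^{\pi i/(8\tau)}$ via $(-q;q)_\infty=(q^2;q^2)_\infty/(q;q)_\infty$, the saddle $y=\frac{1}{4\sqrt{n}}$, the expansion $\sum_{j=-k}^{k}(-1)^jq^{j^2}=(-1)^k+O(k^3y)$, and the reduction to $P_{1/2}(\pi\sqrt{n}/2)$ and $I_{-3/2}(\pi\sqrt{n})$ --- all checks out, including the constant $\frac18$ and the fact that $k\ll n^{1/12-\epsilon}$ is precisely what makes $k^3n^{-5/4}$ a genuine error term. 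No gaps.
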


\begin{remark}
It is interesting to point out that the main term of $\oM_k(n)$ is identical to the main term in the asymptotic expression of $\op(n)$.
\end{remark}

\begin{theorem}
Let $\epsilon>0$ be arbitrary small. Then as $n\to\infty$, we have, for $k\ll n^{\frac{1}{8}-\epsilon}$,
\begin{equation}
MP_k(n)=\frac{\pi}{16}kn^{-\frac{3}{2}}e^{\frac{\pi\sqrt{n}}{\sqrt{2}}}+O\left(k^3n^{-\frac{7}{4}}e^{\frac{\pi\sqrt{n}}{\sqrt{2}}}\right).
\end{equation}
\end{theorem}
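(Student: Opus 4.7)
The plan is to follow, step by step, the three-stage strategy of Section~\ref{sect:proof}: an asymptotic for the generating function near $q=1$, a bound away from $q=1$, and a final application of Wright's circle method. From the Guo--Zeng identity displayed in Section~3, the generating function $\sum_{n\ge 0}MP_k(n)q^n$ equals
$$(-1)^{k-1}\frac{(-q;q^2)_\infty}{(q^2;q^2)_\infty}\sum_{j=0}^{k-1}(-1)^j q^{j(2j+1)}(1-q^{2j+1})$$
(absorbing the constant $MP_k(0)=(-1)^{k-1}$). The only genuinely new input is the modular behaviour of $(-q;q^2)_\infty/(q^2;q^2)_\infty$; I would exploit the rewriting
$$\frac{(-q;q^2)_\infty}{(q^2;q^2)_\infty} = \frac{(q^2;q^2)_\infty}{(q;q)_\infty(q^4;q^4)_\infty},$$
which expresses it as a quotient of three Dedekind-eta values at $\tau,\,2\tau,\,4\tau$. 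Applying \eqref{eq:eta-trans} to each of these three $q$-Pochhammers and using the arithmetic $-\tfrac{1}{24}+\tfrac{1}{12}+\tfrac{1}{48}=\tfrac{1}{16}$, the exponents combine to give
$$\frac{(-q;q^2)_\infty}{(q^2;q^2)_\infty}=\sqrt{-2i\tau}\,e^{\pi i/(16\tau)}\bigl(1+O(e^{-c/y})\bigr).$$
The growth rate $\pi/(16y)$ already fixes the new saddle-point scaling $y=1/(4\sqrt{2n})$ via the equation $2\pi n=\pi/(16y^2)$.

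For the analogue of Lemma~\ref{le:near1}, on $|x|\le My$ the truncated sum expands just as in the proof there; the only change is the elementary identity $\sum_{j=0}^{k-1}(-1)^j(2j+1)=(-1)^{k-1}k$, which produces $\sum_{j=0}^{k-1}(-1)^j q^{j(2j+1)}(1-q^{2j+1})=(-1)^k 2\pi i k\tau+O(k^3 y^2)$. Multiplying by the modular expansion above then yields
$$\sum_{n\ge 0}MP_k(n)q^n = -2\sqrt{2}\,e^{\pi i/4}\pi k\tau^{3/2}e^{\pi i/(16\tau)}+O\bigl(k^3 n^{-5/4}e^{\pi\sqrt{n}/(2\sqrt{2})}\bigr).$$
For the analogue of Lemma~\ref{le:away1}, I would take logarithms and treat the three log-series $\Log(1/(q;q)_\infty)$, $-\Log(1/(q^2;q^2)_\infty)$ and $\Log(1/(q^4;q^4)_\infty)$ term-by-term as in \eqref{eq:away1-part21}, using the separation estimate \eqref{eq:away1-part23} on $|1-q^m|$ at $m=1$ to extract the saving. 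Readjusting $M$ to a suitable positive absolute constant, this gives a bound $\ll k\sqrt{y}\,e^{C/y}$ with $C<\pi/16$ strictly, so that after multiplication by $e^{2\pi ny}$ the piece $\mathcal{I}_2$ is exponentially smaller than the target growth $e^{\pi\sqrt{n}/\sqrt{2}}$.

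Finally, Wright's circle method goes through verbatim: the change of variables $v=-i\tau/y$ with $y=1/(4\sqrt{2n})$ aligns the integrand on $|x|\le My$ with Wright's $P_{3/2}$ at $u=\pi\sqrt{n}/(2\sqrt{2})$, and then $P_{3/2}(u)=I_{-5/2}(2u)+O(e^u)$ together with \eqref{Bessel-order} delivers
$$\mathcal{I}_1 = \frac{\pi}{16}kn^{-3/2}e^{\pi\sqrt{n}/\sqrt{2}}+O\bigl(k^3 n^{-7/4}e^{\pi\sqrt{n}/\sqrt{2}}\bigr),$$
and $\mathcal{I}_2$ is absorbed into the error. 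I expect the principal obstacle to be the constant bookkeeping: one must extract exactly the prefactor $\sqrt{-2i\tau}$ and the exponent $\pi/(16\tau)$ from the three-factor eta-identity with consistent principal-branch choices of $\sqrt{-ij\tau}$, and then carry the extra $\sqrt{2}$ coherently through the Wright--Bessel computation so that the coefficient $\tfrac{\pi}{16}$ (rather than $\tfrac{\pi}{8}$ or $\tfrac{\pi}{32}$) emerges in the final main term.
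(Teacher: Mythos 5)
The paper itself only asserts this theorem "by similar arguments," so your task is to supply the details, and most of your bookkeeping is correct: the identity $\frac{(-q;q^2)_\infty}{(q^2;q^2)_\infty}=\frac{(q^2;q^2)_\infty}{(q;q)_\infty(q^4;q^4)_\infty}$, the resulting expansion $\sqrt{-2i\tau}\,e^{\pi i/(16\tau)}(1+o(1))$ near $q=1$, the saddle point $y=1/(4\sqrt{2n})$, and the Wright--Bessel computation do produce the constant $\tfrac{\pi}{16}$ (I checked: $\sqrt{2}\cdot 4\pi^2 y^{5/2}\cdot I_{-5/2}$ at $2u=\pi\sqrt{n}/\sqrt{2}$ gives exactly $2^{-4}\pi n^{-3/2}e^{\pi\sqrt{n}/\sqrt{2}}$). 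The gap is in your analogue of Lemma \ref{le:away1}. Bounding the three log-series separately gives the trivial exponent $\frac{\pi}{12}\left(1+\frac12+\frac14\right)\frac1y=\frac{7\pi}{48y}$, whereas the main arc produces only $\frac{\pi}{16y}=\frac{3\pi}{48y}$; since $2\pi ny=\frac{\pi}{16y}$ and the main term grows like $e^{\pi/(8y)}$, you must beat $\frac{\pi}{16y}$ on the minor arc, i.e.\ you need a saving of at least $\frac{\pi}{12y}\approx\frac{0.262}{y}$. But the separation estimate \eqref{eq:away1-part23} applied to the $m=1$ term of a single series saves at most $\frac{|q|}{1-|q|}\approx\frac{1}{2\pi y}\approx\frac{0.159}{y}$ no matter how large $M$ is, so no choice of $M$ closes the gap. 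Worse, the analogous $m=1$ savings for the $q^2$- and $q^4$-series vanish entirely near $x=\pm\frac14,\pm\frac12$, where $|1-q^2|$ and $|1-q^4|$ degenerate to $1-|q|^2$ and $1-|q|^4$; this reflects the genuine difficulty that your eta-quotient has growth at several cusps, unlike $1/(q;q)_\infty$. (Unlike the paper's Lemma \ref{le:away1}, where \emph{any} positive saving suffices, here the naive bound overshoots the target by a fixed positive amount.)

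The repair is to first collapse the quotient to a product of the partition type: $\frac{(-q;q^2)_\infty}{(q^2;q^2)_\infty}=\frac{1}{(q;q^2)_\infty(q^4;q^4)_\infty}$, whence
\begin{equation*}
\Log\frac{(-q;q^2)_\infty}{(q^2;q^2)_\infty}=\sum_{m\ge1}\frac{q^m+q^{3m}+q^{4m}}{m(1-q^{4m})},
\end{equation*}
and the term-by-term absolute-value bound now reproduces the \emph{correct} order $\frac{\pi}{16y}$ rather than $\frac{7\pi}{48y}$. One then needs only an arbitrarily small saving, extracted from the $m=1$ term $\frac{q+q^3+q^4}{1-q^4}$; this requires a short case analysis because near $x=\pm\frac14,\pm\frac12$ the denominator gives no saving and one must instead use that $|q+q^3+q^4|$ is there bounded away from $|q|+|q|^3+|q|^4$ (e.g.\ at $q\approx -1$ or $q\approx \pm i$ the numerator has modulus about $1$ rather than $3$). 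With that lemma in place, the rest of your argument goes through as written.
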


\subsection*{Acknowledgements}

I would like to thank George Andrews for some helpful suggestions.

\bibliographystyle{amsplain}

\end{document}